\documentclass{article}
\usepackage{authblk}
\usepackage[utf8]{inputenc}
\usepackage{amsmath,amsfonts,amsthm}
\usepackage{enumitem}
\usepackage{color}
\usepackage{graphicx}

\newcommand{\ccurl}{\operatorname{curl}}
\newcommand{\ddiv}{\operatorname{div}}
\newcommand{\cA}{\mathcal{A}}
\def\bR{\mathbb{R}}
\def\cT{\mathcal{T}}
\def\cE{\mathcal{E}}
\def\cV{\mathcal{V}}
\def\bx{\boldsymbol{x}}
\newcommand{\NA}{\mathbb{N}}
\newcommand{\norm}[1]{\left\Vert #1 \right\Vert}

\newtheorem{lemma}{Lemma}
\newtheorem{remark}{Remark}
\newtheorem{theorem}{Theorem}
\newtheorem{proposition}{Proposition}
\newenvironment{proof*}[1]
  {%
   \begin{proof}}
  {\end{proof}}
\title{An adaptive finite element scheme for the Hellinger--Reissner elasticity mixed eigenvalue problem}
\author[1,2]{Fleurianne Bertrand}
\author[2,3]{Daniele Boffi}
\author[1,4]{Rui Ma}
\affil[1]{Humboldt-Universit\"at zu Berlin, Germany}
\affil[2]{King Abdullah University of Science and Technology, Saudi Arabia}
\affil[3]{University of Pavia, Italy}
\affil[4]{Universit\"at Duisburg-Essen, Germany}
\date{}

\begin{document}

\maketitle

\begin{abstract}
In this paper we study the approximation of eigenvalues arising from the mixed Hellinger--Reissner elasticity problem by using a simple finite element introduced recently in~\cite{HuMa2019}. We prove that the method converge when a residual type error estimator is considered and that the estimator decays optimally with respect to the number of degrees of freedom. The analogue of a postprocessing technique introduced in~\cite{GedickeKhan2018} is discussed and tested numerically.
\end{abstract}
\section{Introduction}

In this paper we introduce, discuss, and analyze a finite element scheme for the approximation of the eigenmodes associated with linear elasticity.

We consider a simple elasticity finite element introduced recently in~\cite{HuMa2019}, which is a modification of the Hu--Zhang element~\cite{HuZhang2015}.
In the framework of the mixed Hellinger--Reissner elasticity problem, the Hu--Zhang element approximates the stresses with continuous symmetric polynomials of degree $k$ enriched by $H(\ddiv)$ symmetric bubbles of degree $k$. The displacements are approximated by discontinuous polynomials of degree $k-1$ in each component.

A key property for the analysis of adaptive schemes is the nestedness of the finite element spaces: i.e., the standard analysis uses the fact that if $\cT_h$ is a refinement of $\cT_H$, then the finite element spaces defined on $\cT_H$ are contained in those built on $\cT_h$. Since this property is not satisfied by the original Hu--Zhang element, in~\cite{HuMa2019} a modification of the element has been introduced in order to guarantee the nestedness of the spaces after refinements based on newest vertex bisection.

The aim of our analysis is to extend the results of~\cite{HuMa2019} to the corresponding eigenvalue problem. By combining the abstract theory of~\cite{CarstensenFPD2014} and~\cite{CarstensenH2017} with the results of~\cite{BoffiGGG} we can prove that a residual type error estimator $\eta$ can drive an adaptive scheme in such a way that $\eta$ decays optimally in terms of the number of degrees of freedom. Moreover, we discuss the application of a postprocessing technique introduced in~\cite{GedickeKhan2018} in order to improve the rate of convergence of the scheme.

\section{Elasticity eigenvalue problem and its discretization}
\subsection{Continuous problem}
Given a bounded Lipschitz polygonal domain $\Omega\subset\mathbb{R}^2$, we
define $\Sigma:=\{\tau\in H(\ddiv,\Omega;\mathbb{S})$, where $\mathbb{S}$ denotes the space of symmetric $2\times2$ matrices, and $V:=L^2(\Omega;\mathbb{R}^2)$.

The mixed formulation of the elasticity eigenvalue problem seeks $(\lambda,\sigma,u )\in\bR\times\Sigma\times V$ with $\norm{u}_{L^2}=1$ that solve
\begin{equation}
\aligned
 (A\sigma,\tau)+(\ddiv\tau,u)&= 0 &&\text{for all } \tau\in\Sigma ,\\
   (\ddiv\sigma,v)&=- (\lambda u,v)  &&\text{for all } v\in V.\label{eq:conEigen}
\endaligned
\end{equation}

We assume that the compliance tensor $A$ is in $L^\infty(\mathbb{S},\mathbb{S})$ and positive definite uniformly in $\Omega$. Thanks to the regularity assumption on the domain the problem is associated with a compact solution operator. It is then well known that the eigenvalues can be numbered in an increasing order as follows:
\begin{align*}
    0<\lambda_1\leq\lambda_2\leq\lambda_3\leq\cdots;
\end{align*}
we denote the corresponding eigenfunctions by $\{(\sigma_1,u_1),(\sigma_2,u_2),\dots\}$.
\subsection{Discretization}
We are going to use the extended stress space on adaptive meshes introduced in~\cite{HuMa2019}. The finite element space is constructed locally on the actual elements without the use of a reference configuration and depends on the refinements; more precisely, the stresses are augmented by suitable shape functions in a neighborhood of the new vertices introduced by the adaptive algorithm.

Given an initial shape-regular triangulation $\cT_0$ of $\Omega$ into triangles, let $\cT_h$ denote  a refinement of $\cT_0$ after a finite number of successive bisections of triangles by the newest vertex bisection strategy. Given $K\in\cT_h$, let $P_k(K; X)$ denote as the space of
polynomials of degree $\leq k$, taking value  in the finite-dimensional vector space $X$. The range space $X$ will be either $\mathbb{S}$, $\mathbb{R}^2$ or $\mathbb{R}$.

Let us fix the polynomial degree $k\ge3$.
We recall the discrete stress space $\widetilde{\Sigma}_h$ and the displacement space $V_h$ from \cite{Hu2015,HuZhang2015}
\begin{align}
\begin{aligned}
\widetilde{\Sigma}_h:=&\{\tau\in \Sigma : \tau=\tau_c+\tau_b,\ \tau_c\in H^1(\Omega;\mathbb{S}),\ \tau_c|_K\in P_k(K;\mathbb{S}),
\\
& \qquad\tau_b|_K\in \Sigma_{\partial K,k,b}\text{ for all } K\in \mathcal{T}_h\},
\\
V_{h}:=&\{v\in L^2(\Omega;\bR^2) : v|_K\in P_{k-1}(K;\bR^2)\text{ for all }K\in \cT_h\},
\end{aligned}
\end{align}
where the full $H(\ddiv,K;\mathbb{S})$ bubble function space consisting of polynomials of degree $\leq k$ is
\begin{equation*}
\Sigma_{\partial K,k,b}:=\{\tau\in P_k(K;\mathbb{S}),\tau \nu|_{\partial K}=0\} 
\end{equation*}
with the outward unit normal $\nu$ of $\partial K$.

\begin{remark}
Functions in $\widetilde{\Sigma}_h$ are continuous at the vertices of $\cT_h$ and only $H(\ddiv)$ conforming along the edges. For this reason the stress spaces are not nested after mesh refinements. This is the main motivation for the introduction of a suitable modification in~\cite{HuMa2019}, where the continuity is relaxed at the vertices added by the adaptive refinement.
\end{remark}

Let $\cV_0$ denote the set of all vertices of $\cT_0$. Let $\cV_h$ (resp. $\cV_{h,0}$) denote the set of all (resp. internal) vertices of $\cT_h$. The newest-vertex bisection creates each new vertex $\bx\in \cV_{h,0}\setminus\cV_{0}$ as a midpoint of an old edge $e$ associated with a tangential vector $t_{\bx}$ and normal vector $\nu_{\bx}$ of $e$; the set of elements meeting at $\bx$ is split into two patches $\omega_{\bx}^+$ and $\omega_{\bx}^-$ by the edge $e$, namely
\begin{align*}
\omega_{\bx}^+&=\bigcup\{K\ |\ K\in\cT_h,\bx\in K, ({\rm mid}(K)-\bx)\cdot\nu_{\bx}>0\},\\
\omega_{\bx}^-&=\bigcup\{K\ |\ K\in\cT_h,\bx\in K, ({\rm mid}(K)-\bx)\cdot\nu_{\bx}<0\},
\end{align*}
where ${\rm mid}(K)$ is the barycenter of $K$.

Let $\phi_{\bx}$ denote the nodal basis at $\bx$ of the Lagrange element of order $k$ and define the space
\[
    E_h:=\operatorname*{span}_{\bx\in \cV_{h,0}\setminus\cV_{0}}\{\phi_{\bx}|_{\omega_{\bx}^+}t_{\bx}t_{\bx}^T,\phi_{\bx}|_{\omega_{\bx}^-}t_{\bx}t_{\bx}^T\},
\]
where the span is taken over all newly added vertices $\bx$ along the internal edges.
Then the extended stress space $\Sigma_h$ in \cite[Section~ 3.1]{HuMa2019} is defined as
\begin{align*}
    \Sigma_h:=\widetilde{\Sigma}_h+E_h.
\end{align*}

It is proved in~\cite[Theorem~3.2]{HuMa2019} that if $\cT_h$ is a refinement of $\cT_H$ then $\Sigma_H\subset\Sigma_h$.

The discretization of \eqref{eq:conEigen} seeks $(\lambda_h,\sigma_h,u_h)\in\bR\times\Sigma_h\times V_h$ with $\norm{u_h}_{L^2}=1$ such that
\begin{align}
\begin{aligned}
 (A\sigma_h,\tau_h)+(\ddiv\tau_h,u_h)&= 0 &  \text{ for all   } \tau_h\in\Sigma_h ,\\
   (\ddiv\sigma_h,v_h)&=- (\lambda_h u_h,v_h)  & \text{ for all \ } v_h\in V_h.\label{eq:disEigen}
\end{aligned}
\end{align}
The discrete eigenvalues can be enumerated as 
\begin{align*}
0<\lambda_{h,1}\leq\lambda_{h,2}\leq\cdots\leq\lambda_{h,N(h)}
\end{align*}
with corresponding eigenfunctions $\{(\sigma_{h,1},u_{h,1}),\cdots,(\sigma_{h,N(h)},u_{h,N(h)})\}$, where $N(h)={\rm dim}(V_h)$.

The a priori convergence of the discrete eigenmodes to the continuous one is a consequence of the error estimates proved in~\cite{HuMa2019,HuZhang2015}. Indeed, from Theorem~3.1 and Remark~3.2 of~\cite{HuZhang2015} the following theorem can be proved.

\begin{theorem}
Let $\lambda=\lambda_i=\dots\lambda_{i+m-1}$ be an eigenvalue of~\eqref{eq:conEigen} of multiplicity $m$ and let us denote by $\mathcal{G}$ the $m$-dimensional eigenspace spanned by $\{u_i,\dots,u_{i+m-1}\}$ and by $\mathcal{F}$ the space spanned by $\{\sigma_i,\dots\sigma_{i+m-1}\}$. Let $\mathcal{G}_h$ denote the space spanned by the corresponding discrete eigenfunctions $\{u_{h,i},\dots,u_{h,i+m-1}\}$ and let us define
\[
\varepsilon_h=\sup_{\substack{(u,\sigma)\in\mathcal{G}\times\mathcal{F}\\\|(u,\sigma)\|_{L ^2\times H(\ddiv)}=1}}\min_{(v_h,\tau_h)\in V_h\times\Sigma_h}(\|u-v_h\|_{L^2}+\|\sigma-\tau_h\|_{H(\ddiv)}).
\]
Then
\[
\aligned
&|\lambda-\lambda_{h,j}|\le C\varepsilon_h^2\quad&&j=i,\dots i+m-1\\
&\delta(\mathcal{G},\mathcal{G}_h)\le C\varepsilon_h,
\endaligned
\]
where $\delta(A,B)$ denotes the gap between the subspaces $A$ and $B$.
\end{theorem}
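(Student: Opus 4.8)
The plan is to recast the eigenvalue problem~\eqref{eq:conEigen} in the language of solution operators and then to invoke the abstract spectral approximation theory for symmetric mixed eigenvalue problems developed in~\cite{BoffiGGG}. First I would introduce the continuous solution operator $T\colon V\to V$ associated with the source problem: given $g\in V$, set $Tg:=u$, where $(\sigma,u)\in\Sigma\times V$ solves $(A\sigma,\tau)+(\ddiv\tau,u)=0$ for all $\tau\in\Sigma$ and $(\ddiv\sigma,v)=-(g,v)$ for all $v\in V$. Well-posedness of this saddle-point problem follows from the coercivity of $(A\sigma,\tau)$ on the kernel of the divergence (where the $H(\ddiv)$ and $L^2$ norms agree) together with the continuous inf-sup condition for $(\ddiv\tau,v)$; the regularity of $\Omega$ makes $T$ compact, and the symmetry of $A$ makes $T$ selfadjoint and positive definite in the $L^2$ inner product. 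The eigenpairs of~\eqref{eq:conEigen} are then exactly the reciprocal eigenvalues $1/\lambda_i$ of $T$ with the same eigenfunctions $u_i$, and the recovered stresses span $\mathcal{F}$. The discrete operator $T_h\colon V\to V_h$ is defined analogously from~\eqref{eq:disEigen} and is selfadjoint on $V_h$.

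Next I would quantify the convergence $T_h\to T$ on the eigenspace. Taking $g\in\mathcal{G}$ and applying the a priori error estimate for the mixed source problem from~\cite[Theorem~3.1, Remark~3.2]{HuZhang2015}, valid on the nested adaptive spaces of~\cite{HuMa2019}, one obtains
\[
\norm{(T-T_h)|_{\mathcal{G}}}_{\mathcal{L}(L^2)}\le C\,\varepsilon_h,
\]
because the right-hand side of that estimate is precisely the combined best-approximation error of the eigenpair $(\sigma,u)\in\mathcal{F}\times\mathcal{G}$ in $\Sigma_h\times V_h$, measured in the $H(\ddiv)\times L^2$ norm, which is the quantity $\varepsilon_h$.

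With this uniform operator convergence in hand, the abstract theory of~\cite{BoffiGGG} applies directly: for $h$ small enough the algebraic and geometric multiplicities are preserved, the discrete eigenvalues $\lambda_{h,j}$, $j=i,\dots,i+m-1$, cluster around $\lambda$, and the gap estimate $\delta(\mathcal{G},\mathcal{G}_h)\le C\,\norm{(T-T_h)|_{\mathcal{G}}}\le C\,\varepsilon_h$ holds, which is the second inequality. The double order $|\lambda-\lambda_{h,j}|\le C\varepsilon_h^2$ then reflects the selfadjointness of the problem: in the Babuška--Osborn eigenvalue identity the leading term factorizes as a product of the best-approximation error of the primal eigenfunction and that of the adjoint (dual) eigenfunction, and since $T=T^*$ these coincide, so both factors equal the same $\varepsilon_h$ and one picks up $\varepsilon_h^2$, the remainder being of higher order.

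I expect the main obstacle to be not the abstract spectral machinery but the verification that its hypotheses genuinely hold for the present discretization. Concretely, one must confirm the \emph{uniform} (in $h$) discrete inf-sup condition and kernel-coercivity on the extended, nonstandard spaces $\Sigma_h=\widetilde{\Sigma}_h+E_h$, so that the source estimate of~\cite{HuZhang2015} transfers to the whole adaptive family of~\cite{HuMa2019} with an $h$-independent constant, and that the combined $H(\ddiv)\times L^2$ best-approximation error appearing there is exactly the $\varepsilon_h$ of the statement. Once this is secured, the remaining arguments reduce to a careful but routine application of the existing theory.
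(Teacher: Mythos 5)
Your proposal is correct and follows essentially the same route as the paper, which gives no written proof but simply states that the theorem follows from the a priori source-problem estimates of Theorem~3.1 and Remark~3.2 of~\cite{HuZhang2015} (extended to the adaptive spaces in~\cite{HuMa2019}) combined with the standard spectral approximation theory for compact selfadjoint solution operators. Your write-up merely makes explicit the solution-operator framework and the Babu\v{s}ka--Osborn doubling argument that the authors leave implicit.
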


In the rest of this paper, $\cT_h$ will denote an arbitrary refinement of a fixed mesh $\cT_H$, while $\cT_\ell$ refers to the sequence designed by that adaptive procedure. The eigenmode approximation $(\lambda,\sigma,u)$ will be indicated by $(\lambda_\kappa,\sigma_\kappa,u_\kappa)$ where $\kappa$ may be $H$, $h$ or $\ell$, respectively.

\subsection{Error estimator and adaptive method}
\label{sec:resestimator}

In order to keep the notation as simple as possible, we consider the approximation of an eigenvalue $\lambda$ of~\eqref{eq:conEigen} of multiplicity equal to one. A corresponding eigenfunction is denoted by $(\sigma,u)$ and the eigenspace by $E$. More general situations need appropriate modifications in the spirit of~\cite{BoffiGGG,Gallistl2015}.

Let $\cE_h$ (resp. $\cE_h(\Omega)$
and $\cE_h(\partial\Omega)$) denote the collection of all (resp. interior and boundary)
element edges of $\cT_h$. For any triangle $K\in\cT_h$, let $\cE(K)$ denote the set of its edges and let $h_K:=|K|^{1/2} $ with $h :=\max_{K\in\cT}h_K$. For any edge $e\in \cE_h$, let $h_e:=|e|$ and let $t_e$
denote the unit tangential vector and let $\nu_e:=t_e^\perp$
denote the unit normal vector.   The jump $[w]_e$ of $w$ across edge $e = K_1\cap K_2$ reads $
[w]_e := (w|_{K_1}
)|_e- (w|_{K_2}
)|_e$.
Particularly, if $e\in\mathcal{E}_h(\partial\Omega)$, $[w]_e := w|_e$.
The scheme is based on the local error estimator
\begin{align}\label{eq:estimatorall}
\begin{aligned}
\eta^2(\cT_h,K): ={}& h_K^4\|\ccurl\ccurl(A\sigma_h)\|_{L^2(K)}^2\\
&+\sum_{e\in\cE(K)}\Big(h_{K}\|\mathcal{J}_{e,1}\|_{L^2(e)}^2+h_K^3\|\mathcal{J}_{e,2}\|_{L^2(e)}^2\Big)\\
&+h_{K}^2\norm{ A\sigma_h-\epsilon(u_h)}^2_{L^2(K)}+ \sum_{e\in\cE(K)}h_K\norm{[u_h]_e}_{L^2(e)}^2
\end{aligned}
\end{align}
with
$$
\begin{array}{ll}
 \mathcal{J}_{e,1}:=&\left\{\begin{array}{ll}
 \Big[(A\sigma_h)t_e\cdot t_e\Big]_e&{\rm if~} e\in\mathcal{E}_h(\Omega),\\
  \Big((A\sigma_h)t_e\cdot t_e \Big)\big|_e\quad\quad\quad\quad\quad\ & {\rm if~} e\in\mathcal{E}_h(\partial\Omega),
 \end{array}\right. \\
 \\
 \mathcal{J}_{e,2}:=&\left\{\begin{array}{lr}
 \Big[{\rm curl}(A\sigma_h)\cdot t_e\Big]_e\quad&\quad\hspace{1mm} {\rm if~} e\in\cE_h(\Omega),\\
 \Big({\rm curl}(A\sigma_h)\cdot t_e -\partial _{t_e}\big((A\sigma_H)t_e\cdot \nu_e\big)\Big)\big|_e& \hspace{-1mm} {\rm if~} e\in\mathcal{E}_h(\partial\Omega).\hspace{0.5mm}
 \end{array}\right.
 \end{array}
$$

As usual we define the global error estimator $\eta^2(\cT_h):=\sum_{K\in\cT_h}\eta^2(\cT_h,T)$ and, given any $\mathcal{M}\subseteq\cT_h$, we define $\eta^2(\cT_h,\mathcal{M}):=\sum_{K\in\mathcal{M}}\eta^2(\cT_h,K)$.

The adaptive scheme adopts then the standard SOLVE--ESTIMATE--MARK--REFINE strategy with D\"orfler marking corresponding to a bulk parameter $\theta\in(0,1)$ (where $\theta=1$ means uniform refinement and $\theta=0$ means no refinement). The usual structure of the algorithm (after $\ell\in\NA$ levels of refinement) is as follows.

\begin{description}[leftmargin=1.9cm,style=nextline]
\item[Solve:]
compute the discrete solution $(\lambda_\ell,\sigma_\ell,u_\ell)$ on the mesh $\cT_\ell$.
\item[Estimate:]
compute the local contributions of the error estimator $\eta^2(\cT_\ell)$.
\item[Mark:]
choose a minimal subset $\mathcal{M}_\ell\subset\cT_\ell$ such that $\theta\eta^2(\cT_\ell)\le\eta^2(\mathcal{M}_\ell)$
\item[Refine:]
generate a new triangulation $\cT_{\ell+1}$ (by the newest vertex bisection algorithm) as the smallest refinement of $\cT_\ell$ satisfying $\mathcal{M}_\ell\cap\cT_{\ell+1}=\emptyset$.
\end{description}

The convergence of the adaptive scheme is usually analyzed in the framework of nonlinear approximation classes. Given an initial triangulation $\cT_0$, the best convergence rate $s\in(0,+\infty)$ for the approximation of a space $W$ is characterized in terms of
\[
|W|_{\cA_s}=\sup_{N\in\NA}N^s\inf_{\cT\in\cT(N)}\delta(W,W_\cT),
\]
where $\cT(N)$ is the set of triangulations obtained by $\cT_0$ after adding at most $N$ elements, $W_\cT$ is an approximation of $W$ on the mesh $\cT$, and $\delta(A,B)$ is the gap between $A$ and $B$. In particular, $|W|_{\cA_s}<+\infty$ if $\delta(W,W_\cT)=O(N^{-s})$ for an optimal triangulation in $\cT(N)$.

The next theorem contains the main result of this paper: it states that the error estimators goes to zero optimally with respect to the number of degrees of freedom.

\begin{theorem}\label{thm:optimalconvergence}
Let $s\in(0,+\infty)$ be such that $\sup_{N\in\mathbb{N}_0}(1+N)^s\min\eta(\cT(N))<+\infty$
Provided that the meshsize of the initial mesh $\cT_0$ and the bulk parameter $\theta$ are small enough, the output of the adaptive algorithm satisfies
\begin{align*}
    \sup_{\ell\in\mathbb{N}_0}(1+|\cT_\ell|-|\cT_0|)^s\eta(\cT_\ell)\approx\sup_{N\in\mathbb{N}_0}(1+N)^s\min\eta(\cT(N)).
\end{align*}
\end{theorem}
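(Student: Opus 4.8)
The plan is to follow the now-standard axioms-of-adaptivity framework of Carstensen--Feischl--Page--Praetorius, as specialized to eigenvalue problems in~\cite{CarstensenFPD2014,CarstensenH2017}, which reduces optimal convergence to the verification of four structural properties of the estimator $\eta$. Concretely, I would show that the pair (estimator, error) satisfies:\ (A1) \emph{stability} of $\eta$ on non-refined elements under mesh refinement; (A2) \emph{reduction} of $\eta$ on refined elements, i.e.\ a contraction by a factor $<1$ up to a term controlled by the distance between discrete solutions on the two meshes; (A3) \emph{quasi-orthogonality} of the sequence of discrete eigenmodes; and (A4) \emph{discrete reliability}, asserting that the difference between the discrete solutions on $\cT_H$ and a refinement $\cT_h$ is bounded by the estimator restricted to the refined elements. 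Once (A1)--(A4) hold, the abstract machinery yields both linear convergence and, under the smallness assumptions on $h_0$ and $\theta$, the optimality equivalence in the statement.

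First I would fix the Hellinger--Reissner structure: the estimator~\eqref{eq:estimatorall} is of residual type, and its four groups of terms (the $\ccurl\ccurl$ volume term, the two edge jumps $\mathcal{J}_{e,1},\mathcal{J}_{e,2}$, the constitutive residual $A\sigma_h-\epsilon(u_h)$, and the displacement jump $[u_h]_e$) are precisely the quantities for which mesh-size scaling and inverse/trace estimates give (A1) and (A2) by routine local computations; the only care needed is the modified boundary flux $\mathcal{J}_{e,2}$ involving $A\sigma_H$, whose dependence on the coarser mesh must be tracked so that it does not spoil the element-wise stability bounds. The genuinely eigenvalue-specific ingredients are (A3) and, above all, (A4): here I would import the a~priori estimates of Theorem~1, together with the spectral results of~\cite{BoffiGGG}, to convert the abstract ``distance between discrete solutions'' into the residual estimator. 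Because the eigenvalue problem is nonlinear, the source term $\lambda_h u_h$ changes from mesh to mesh, so quasi-orthogonality cannot be obtained from pure Galerkin orthogonality; instead it follows from the $O(\varepsilon_h^2)$ eigenvalue convergence and the $O(\varepsilon_h)$ gap estimate of Theorem~1, which guarantee that the perturbation of the right-hand side is higher order and summable.

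The crucial and most delicate step is discrete reliability (A4). Unlike the standard conforming case, the stress spaces here are \emph{not} naturally nested for the original Hu--Zhang element, and nestedness is recovered only through the extended space $\Sigma_h=\widetilde{\Sigma}_h+E_h$ and the inclusion $\Sigma_H\subset\Sigma_h$ from~\cite[Theorem~3.2]{HuMa2019}. I would therefore lean heavily on that inclusion to subtract the coarse discrete solution as an admissible test function on the fine mesh, and then construct a quasi-interpolation operator onto $\Sigma_h$ that (i) is the identity on elements of $\cT_H$ not touched by refinement, and (ii) has the commuting-diagram property with $\ddiv$ so that the error splits into the genuine residual contributions supported on the refined region. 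The main obstacle I anticipate is controlling the enrichment functions of $E_h$ in this interpolation: because they are attached to the newly created vertices and live on the half-patches $\omega_{\bx}^\pm$, I expect to need a localized Helmholtz-type decomposition of the symmetric stress field (as underlies the $\ccurl\ccurl$ structure of the estimator) to certify that the nonconforming-at-vertices part of $\Sigma_h$ contributes only to the refined elements, and to ensure the associated constants are independent of the refinement depth. Once (A1)--(A4) are in place with mesh-independent constants, the conclusion of Theorem~\ref{thm:optimalconvergence} follows verbatim from the abstract optimality theorem of~\cite{CarstensenFPD2014}, with the smallness of $h_0$ and $\theta$ entering exactly to absorb the higher-order eigenvalue perturbation terms into the contraction.
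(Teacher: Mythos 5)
Your proposal follows essentially the same route as the paper: the proof is reduced to the Carstensen--Feischl--Page--Praetorius axioms, which the paper packages as discrete reliability (your A4), quasi-orthogonality (your A3), and a contraction property subsuming your A1--A2, and the key technical ingredients you identify --- the nestedness $\Sigma_H\subset\Sigma_h$ from~\cite{HuMa2019}, a discrete Helmholtz decomposition for the symmetric stresses, and a commuting-diagram interpolation --- are exactly those used in the paper's proof of Proposition~\ref{thm:discrete reliability}. The only minor difference is that the paper's quasi-orthogonality rests specifically on the superconvergence estimate $\norm{P_hu-u_h}\le\rho_{sc}(h)(\norm{\sigma-\sigma_h}_A+\norm{u-u_h})$ rather than directly on the a~priori bounds of Theorem~1, but this is the same ``higher-order perturbation'' mechanism you describe.
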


In the statement of Theorem~\ref{thm:optimalconvergence} we follow the approach of~\cite{CarstensenFPD2014}, where sufficient conditions (axioms) for the convergence of the error estimator are considered.
If the eigenspace $E$ corresponding to $\lambda$ satisfies $|E|_{\cA_s}<+\infty$, then the optimal convergence of the actual error could then be obtained as a second step by using the efficiency of the error estimator. The interested reader is referred to~\cite[Section~4.1]{CarstensenFPD2014} for more links between these results.

The proof of Theorem~\ref{thm:optimalconvergence} is based on the following three propositions, see \cite{CarstensenFPD2014,CarstensenH2017}, which we are going to prove in the next section. In order to simplify the notation, from now on we omit the subscript for the $L^2$ norm which will be denoted simply by $\|\cdot\|$ and we denote by $\|\cdot\|_A$ the norm associated with the scalar product induced by the compliance matrix $(A\cdot,\cdot)$. Analogously, we continue to use the notation $(\cdot,\cdot)$ to indicate the $L^2$ scalar product in the whole domain $\Omega$.
\begin{proposition}
[discrete-reliability]\label{thm:discrete reliability} There exists a constant $C_{\rm drel}$ and a function $\rho_{\rm drel}(H)$ tending to zero, such that, for a sufficiently fine mesh $\cT_H$ and for all its refinements $\cT_h$, it holds that
\begin{align*}
\norm{\sigma_h-\sigma_H}_A+\norm{u_h-u_H}\leq{} &C_{\rm drel}\eta(\cT_H,\cT_H\setminus\cT_h)\\
&+\rho_{\rm drel}(H)(\norm{\sigma_h-\sigma_H}_A+\norm{u_h-u_H}).
\end{align*}
\end{proposition}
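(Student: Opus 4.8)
The plan is to establish the discrete reliability estimate by relating the algebraic difference of two discrete solutions to the residual of the coarse solution tested against fine-mesh functions, exploiting the nestedness $\Sigma_H\subset\Sigma_h$ and $V_H\subset V_h$ guaranteed by \cite[Theorem~3.2]{HuMa2019}. First I would subtract the two discrete systems \eqref{eq:disEigen} written on $\cT_H$ and $\cT_h$. Because the coarse spaces are contained in the fine ones, the coarse solution $(\sigma_H,u_H)$ may legitimately be inserted as a test function in the fine system, and vice versa. This yields a residual identity in which the left-hand side is the quantity $\norm{\sigma_h-\sigma_H}_A^2+\norm{u_h-u_H}^2$ (or a closely related bilinear pairing) and the right-hand side is the residual of the coarse eigenpair, namely the terms
\[
(A\sigma_H,\tau_h)+(\ddiv\tau_h,u_H)\quad\text{and}\quad(\ddiv\sigma_H,v_h)+(\lambda_H u_H,v_h),
\]
evaluated on fine-mesh functions $\tau_h\in\Sigma_h$ and $v_h\in V_h$.

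The core of the argument is then to bound this residual by the estimator contributions $\eta(\cT_H,\cT_H\setminus\cT_h)$ localized on the refined region. Here I would follow the standard route: choose the test function $\tau_h-\tau_H$ (its coarse interpolant subtracted), so that by Galerkin orthogonality the residual pairs only against the part of the test function supported on elements that have been refined, i.e.\ on $\cT_H\setminus\cT_h$. Integration by parts on each element converts the volume residual into the $\ccurl\ccurl(A\sigma_H)$ term and the edge jumps $\mathcal{J}_{e,1}$, $\mathcal{J}_{e,2}$, while the displacement residual produces the $A\sigma_H-\epsilon(u_H)$ term and the jump $[u_H]_e$; these are precisely the four families of terms appearing in \eqref{eq:estimatorall}. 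Applying a quasi-interpolation operator with the usual local approximation and stability estimates, together with the scaling weights $h_K^4$, $h_K$, $h_K^3$, $h_K^2$, $h_K$ that are built into $\eta$, collapses the residual pairing into $C_{\rm drel}\,\eta(\cT_H,\cT_H\setminus\cT_h)$ times the norm of the difference, which after division gives the first term on the right.

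The genuinely delicate point — and the reason for the extra term $\rho_{\rm drel}(H)(\norm{\sigma_h-\sigma_H}_A+\norm{u_h-u_H})$ — is that this is an \emph{eigenvalue} problem, not a source problem, so the right-hand side data $\lambda_H u_H$ and $\lambda_h u_h$ themselves differ between the two meshes. When I subtract the two systems, the displacement equation produces a cross term of the form $(\lambda_h u_h-\lambda_H u_H,v_h)$, which is not a pure residual of the coarse solution but couples the two solutions through the eigenvalue and eigenfunction differences. The plan is to split this as $\lambda_H(u_h-u_H)+(\lambda_h-\lambda_H)u_h$ and to control it using the a priori convergence already available: by the first Theorem in the excerpt, $|\lambda-\lambda_H|\le C\varepsilon_H^2$ and $\delta(E,E_H)\le C\varepsilon_H$, so both $|\lambda_h-\lambda_H|$ and the misalignment of the discrete eigenfunctions are $O(\varepsilon_H)$, where $\varepsilon_H\to0$ as the coarse mesh is refined. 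Absorbing these small factors into a function $\rho_{\rm drel}(H)$ that tends to zero as $H\to0$ is exactly what allows the problematic eigenvalue coupling to be moved to the right-hand side as a higher-order perturbation, at the cost of requiring $\cT_H$ to be sufficiently fine. I expect this control of the eigenvalue/eigenfunction cross terms — and verifying that the resulting $\rho_{\rm drel}(H)$ genuinely vanishes uniformly over all refinements $\cT_h$ of $\cT_H$ — to be the main obstacle, since it requires combining the abstract eigenvalue perturbation theory of \cite{BoffiGGG} with the source-problem reliability machinery of \cite{CarstensenFPD2014,CarstensenH2017}.
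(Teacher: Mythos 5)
There is a genuine gap: the saddle--point structure of the Hellinger--Reissner system defeats the ``subtract the two systems and test with the error'' strategy. If you insert $\tau_h=\sigma_h-\sigma_H$ and $v_h=u_h-u_H$ into the difference of the two discrete problems, the bilinear form only returns $\norm{\sigma_h-\sigma_H}_A^2$ plus the cross term $(\ddiv(\sigma_h-\sigma_H),u_h-u_H)$; there is no mechanism that produces $\norm{u_h-u_H}^2$ on the left-hand side, so the displacement error is simply not controlled by your residual identity. This is precisely why the paper splits the proof into two separate steps: the stress error is handled through a discrete Helmholtz decomposition (writing the divergence-free part of $\sigma_h-\sigma_H$ as $\ccurl\ccurl\phi_h$ with $\phi_h$ in an extended Argyris space, after peeling off a piece $\zeta_h$ with prescribed divergence that is bounded by an $L^2$ best-approximation result), while the displacement error requires a genuinely different duality argument, constructing on each refined element a local $\tau\in H^1_0(K;\mathbb{S})$ with $\ddiv\tau=P_Hu_h-u_h$ and exploiting the commuting interpolation operators $\Pi_h,\Pi_H$ with $\ddiv\Pi_h\tau=P_h\ddiv\tau$. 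Your proposed quasi-interpolation/integration-by-parts step also cannot produce the estimator as written: the weights $h_K^4$ and $h_K^3$ on the $\ccurl\ccurl(A\sigma_H)$ and $\mathcal{J}_{e,2}$ terms encode the fact that the test function is a \emph{double} curl of an $H^2$ function (two orders of approximation), which does not come out of testing against a generic $H(\ddiv)$ quasi-interpolation remainder; and the $A\sigma_H-\epsilon(u_H)$ and $[u_H]_e$ contributions arise in the displacement step, not from the stress residual.

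You do correctly identify the eigenvalue coupling $(\lambda_hu_h-\lambda_Hu_H,v_h)$ as the source of the perturbation term $\rho_{\rm drel}(H)(\norm{\sigma_h-\sigma_H}_A+\norm{u_h-u_H})$, and the idea of splitting it and absorbing it for $H$ small is in the right spirit. However, the paper does not control it directly from the a priori theorem; it introduces the auxiliary discrete source problem \eqref{eq:auxdisEigen} with frozen datum $\lambda_hu_h$ and uses the superconvergence estimate $\norm{P_hu-u_h}\le\rho_{sc}(h)(\norm{\sigma-\sigma_h}_A+\norm{u-u_h})$ together with Lemmas~\ref{lem:diffaux} and~\ref{lem:diffauxtwo} to show that $\sigma_H-\widehat\sigma_H$ and $u_H-\widehat u_H$ are higher-order. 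Without the auxiliary problem and the Helmholtz/duality machinery, the argument as you outline it cannot be completed.
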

\begin{proposition}
[quasi-orthogonality] \label{thm:quasi-ortho}There exists a function $\rho_{qo}(h)$ tending to zero as $h$ goes to zero, such that
\begin{align*}
   \norm{\sigma_h-\sigma_H}_A^2&+ \norm{u_h-u_H}^2\leq \norm{\sigma-\sigma_H}_A^2+\norm{u-u_H}^2-\norm{\sigma-\sigma_h}_A^2-\norm{u-u_h}^2\\
   &+\rho_{qo}(h)(\norm{\sigma-\sigma_h}_A^2+\norm{u-u_h}^2+\norm{\sigma-\sigma_H}_A^2+\norm{u-u_H}^2).
\end{align*}
\end{proposition}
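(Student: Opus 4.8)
The plan is to start from the elementary Pythagorean expansion of the two coarse-mesh errors. Writing $\sigma-\sigma_H=(\sigma-\sigma_h)+(\sigma_h-\sigma_H)$ and $u-u_H=(u-u_h)+(u_h-u_H)$ and using bilinearity of $(A\cdot,\cdot)$ and $(\cdot,\cdot)$, the quantity $\norm{\sigma_h-\sigma_H}_A^2+\norm{u_h-u_H}^2$ equals the $\rho$-free right-hand side of the claim minus $2X$, where
\[
X:=(A(\sigma-\sigma_h),\sigma_h-\sigma_H)+(u-u_h,u_h-u_H).
\]
Hence the asserted inequality is equivalent to the lower bound $X\ge-\tfrac12\rho_{qo}(h)(\norm{\sigma-\sigma_h}_A^2+\norm{u-u_h}^2+\norm{\sigma-\sigma_H}_A^2+\norm{u-u_H}^2)$, that is, to showing that $X$ is of higher order.

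First I would simplify $X$ by means of the mixed equations. Testing the first equation of \eqref{eq:conEigen} and of \eqref{eq:disEigen} with $\tau_h=\sigma_h-\sigma_H$, which is legitimate because the nestedness $\Sigma_H\subset\Sigma_h\subset\Sigma$ ensures $\sigma_h-\sigma_H\in\Sigma_h$, and subtracting, gives exactly $(A(\sigma-\sigma_h),\sigma_h-\sigma_H)=-(\ddiv(\sigma_h-\sigma_H),u-u_h)$. Since $\ddiv\Sigma_h\subseteq V_h$, the second discrete equation forces $\ddiv\sigma_h=-\lambda_h u_h$ (and analogously on $\cT_H$), so $\ddiv(\sigma_h-\sigma_H)=-\lambda_h u_h+\lambda_H u_H$. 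Substituting and regrouping collapses $X$ to a pure displacement expression,
\[
X=(1+\lambda_h)(u-u_h,u_h-u_H)+(\lambda_h-\lambda_H)(u-u_h,u_H).
\]
The next step removes the non-orthogonal part: since $u-u_h$ is $L^2$-orthogonal to $V_h$ up to its projection, $(u-u_h,u_h-u_H)=(P_hu-u_h,u_h-u_H)$, where $P_h$ is the $L^2$-projection onto $V_h$. This term vanishes for the source problem, where $u_h=P_hu$, so $X$ is entirely an eigenvalue defect. The eigenvalue contribution I would control with the normalization identity $(u-u_h,u)=\tfrac12\norm{u-u_h}^2$ (valid since $\norm u=\norm{u_h}=1$), which gives $(u-u_h,u_H)=O(\norm{u-u_h}^2+\norm{u-u_H}^2)$; combined with the quadratic convergence $|\lambda-\lambda_\kappa|\le C\varepsilon_\kappa^2$ of Theorem~1 this term carries the vanishing factor $|\lambda_h-\lambda_H|$. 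For the main term I would use inf-sup stability together with the first equation to obtain a displacement-defect estimate $\norm{P_hu-u_h}\le\rho(h)\norm{\sigma-\sigma_h}_A$ with $\rho(h)\to0$, and then close by Young's inequality, so that $(P_hu-u_h,u_h-u_H)\le\rho(h)(\norm{\sigma-\sigma_h}_A^2+\norm{u-u_h}^2+\norm{u-u_H}^2)$.

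The main obstacle is exactly this superconvergent bound for $\norm{P_hu-u_h}$. The straightforward argument from the first equation only yields $\norm{P_hu-u_h}\le C\norm{\sigma-\sigma_h}_A$ with an $O(1)$ constant, which is insufficient, since a bare Young step on a product of two first-order quantities cannot produce a prefactor that tends to zero. The extra smallness $\rho(h)\to0$ has to be extracted by a duality (Aubin--Nitsche) argument: solve the adjoint source problem, integrate by parts against $\sigma-\sigma_h$, and exploit that $\sigma-\sigma_h$ is $A$-orthogonal to discrete divergence-free stresses (a direct consequence of the first equation) together with the elliptic regularity of the solution operator on the Lipschitz domain through a discrete Helmholtz decomposition. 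The resulting $\rho(h)$ is essentially the compactness modulus of the solution operator, which tends to zero as $h\to0$; this is the technical heart of the proof and is where the compactness and the a priori estimates of~\cite{BoffiGGG} are invoked.
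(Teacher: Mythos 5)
Your proposal follows essentially the same route as the paper's proof: the Pythagorean expansion, the reduction of the cross term via the difference of the first equations to $(\lambda_hu_h-\lambda_Hu_H,\,P_hu-u_h)$, and the superconvergence estimate $\|P_hu-u_h\|\le\rho_{sc}(h)(\|\sigma-\sigma_h\|_A+\|u-u_h\|)$ --- which is exactly the paper's Lemma~\ref{lem:supcon}, obtained by the duality argument you describe and cited there rather than reproved --- are precisely the ingredients the paper uses. The only cosmetic difference is the treatment of the term carrying $\lambda_h-\lambda_H$: the paper bounds $|\lambda_h-\lambda_H|$ by $\|\sigma_h-\sigma_H\|_A^2+\lambda_H\|u_h-u_H\|^2$ via \eqref{eq:relation} and keeps $\|P_hu-u_h\|$ as the small factor, whereas you make $(u-u_h,u_H)$ quadratic through the normalization identity and rely on $|\lambda_h-\lambda_H|$ being small (which, strictly as a function of $h$ with $\cT_H$ fixed, is the slightly weaker of the two arguments, though it is immediately repaired by writing $(u-u_h,u_H)=(P_hu-u_h,u_H)$ and reusing the superconvergence bound).
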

\begin{proposition}[contraction]\label{thm:contraction}
If the initial mesh $\cT_0$ is sufficiently fine, then there exist constants $\beta>0$ and $0<\gamma<1$ such that the term
\begin{align*}
     \xi_\ell^2:=\eta^2(\cT_\ell)+\beta(\norm{\sigma_\ell-\sigma_{\ell+1}}_A^2+\norm{u_\ell-u_{\ell+1}}^2)
\end{align*}
satisfies for all integers $\ell$
\begin{align*}
     \xi^2_{\ell+1}\leq\gamma\xi^2_{\ell}.
\end{align*}
\end{proposition}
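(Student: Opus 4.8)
The plan is to obtain the one-step contraction of $\xi_\ell$ by feeding an \emph{estimator-reduction} estimate into the quasi-orthogonality of Proposition~\ref{thm:quasi-ortho} and the discrete reliability of Proposition~\ref{thm:discrete reliability}, following the abstract pattern of~\cite{CarstensenFPD2014,CarstensenH2017}. The one ingredient not yet available is estimator reduction, which I would establish first from two structural properties of $\eta$. The first is \emph{stability}: on the triangles common to $\cT_\ell$ and $\cT_{\ell+1}$ the difference of the two estimators is bounded, via inverse and trace inequalities applied termwise to the $\ccurl\ccurl$ contribution and to the edge jumps $\mathcal{J}_{e,1},\mathcal{J}_{e,2}$, by the $A$- and $L^2$-distance of the two discrete solutions. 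The second is \emph{reduction}: under newest-vertex bisection each refined triangle is split into children of half the area, so the weights $h_K^2=|K|$ on the refined elements are multiplied by a fixed factor $<1$, which contracts the volume- and edge-residual terms there. Combining the two gives, for every $\delta>0$,
\begin{align*}
\eta^2(\cT_{\ell+1})\le{}&(1+\delta)\bigl(\eta^2(\cT_\ell)-\kappa\,\eta^2(\cT_\ell,\cT_\ell\setminus\cT_{\ell+1})\bigr)\\
&+(1+\delta^{-1})\,C\bigl(\norm{\sigma_{\ell+1}-\sigma_\ell}_A^2+\norm{u_{\ell+1}-u_\ell}^2\bigr)
\end{align*}
with a bisection reduction factor $\kappa\in(0,1)$.

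Next I would insert the D\"orfler property. Since every marked element is bisected, $\mathcal{M}_\ell\subseteq\cT_\ell\setminus\cT_{\ell+1}$, whence $\eta^2(\cT_\ell,\cT_\ell\setminus\cT_{\ell+1})\ge\eta^2(\cT_\ell,\mathcal{M}_\ell)\ge\theta\,\eta^2(\cT_\ell)$, and therefore
\begin{equation*}
\eta^2(\cT_{\ell+1})\le q_\delta\,\eta^2(\cT_\ell)+\Lambda_\delta\bigl(\norm{\sigma_{\ell+1}-\sigma_\ell}_A^2+\norm{u_{\ell+1}-u_\ell}^2\bigr),\qquad q_\delta:=(1+\delta)(1-\kappa\theta),
\end{equation*}
with $q_\delta<1$ once $\delta$ is small enough. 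The purpose of the extra term $\beta(\norm{\sigma_\ell-\sigma_{\ell+1}}_A^2+\norm{u_\ell-u_{\ell+1}}^2)$ in $\xi_\ell$ is precisely to supply the budget absorbing the remainder $\Lambda_\delta(\cdots)$. It then remains to control the \emph{new} distance $\norm{\sigma_{\ell+1}-\sigma_{\ell+2}}_A^2+\norm{u_{\ell+1}-u_{\ell+2}}^2$ hidden in $\xi_{\ell+1}$: quasi-orthogonality (Proposition~\ref{thm:quasi-ortho}) trades this consecutive discrete distance for the decrease of the genuine error up to the factor $\rho_{qo}$, while Proposition~\ref{thm:discrete reliability}, passed to the limit of infinite refinement, furnishes the reliability bound $\norm{\sigma-\sigma_{\ell+1}}_A^2+\norm{u-u_{\ell+1}}^2\lesssim\eta^2(\cT_{\ell+1})$. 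Combining these with the estimator reduction and choosing first $\beta$ small (so that $\beta\Lambda_\delta$ stays dominated) and then $\gamma\in(q_\delta,1)$, one arrives at $\xi_{\ell+1}^2\le\gamma\,\xi_\ell^2$, provided the perturbations $\rho_{qo}$ and $\rho_{\rm drel}$ are small enough to be absorbed, which is guaranteed once $\cT_0$ is sufficiently fine.

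The step I expect to be the main obstacle is the verification of stability and reduction for \emph{this} estimator on the \emph{non-nested}, locally enriched space $\Sigma_h=\widetilde\Sigma_h+E_h$. The two edge jumps carry different powers of $h$ and the volume term involves $\ccurl\ccurl(A\sigma_h)$, so the inverse estimates must be tuned termwise; and because the enrichment $E_h$ is attached to the newly created vertices, the usual comparison of the two discrete solutions on shared triangles is not immediate. Here I would lean on the inclusion $\Sigma_H\subset\Sigma_h$ of~\cite[Theorem~3.2]{HuMa2019} and on the locality of $E_h$ to reduce the comparison to polynomial pieces on elements away from the refined region. A secondary, but conceptually delicate, point is the final quantitative balancing: since the ``Pythagoras'' cancellation is only approximate, the negative term meant to offset $\Lambda_\delta(\cdots)$ is itself contaminated by $\rho_{qo},\rho_{\rm drel}$, and one must check that $\delta$, $\beta$ and $\gamma$ can be fixed simultaneously with $\gamma<1$ after these contaminations are absorbed — which is exactly where the hypothesis that $\cT_0$ be sufficiently fine enters.
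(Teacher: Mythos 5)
Your proposal follows exactly the route the paper takes: the printed proof of Proposition~\ref{thm:contraction} consists solely of asserting the estimator-reduction inequality $\eta^2(\cT_{\ell+1})\le\gamma_1\eta^2(\cT_\ell)+\beta_1(\norm{\sigma_\ell-\sigma_{\ell+1}}_A^2+\norm{u_\ell-u_{\ell+1}}^2)$ and remarking that the contraction is a standard consequence, and this is precisely the inequality you derive via stability on unrefined elements, the bisection reduction factor, and D\"orfler marking. Everything further in your write-up --- the termwise inverse/trace estimates on the enriched space $\Sigma_h=\widetilde\Sigma_h+E_h$ and the balancing of $\delta$, $\beta$, $\gamma$ against $\rho_{qo}$ and $\rho_{\rm drel}$ --- is detail the paper leaves entirely to \cite{CarstensenFPD2014,CarstensenH2017}, so your attempt is, if anything, more explicit than the paper's own proof.
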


\subsection{Proofs}
This section starts with some error estimates for the approximation of Problem~\eqref{eq:disEigen}. The first one is a superconvergence estimate.
\begin{lemma}\label{lem:supcon}Let $(\lambda,\sigma,u)$ solve \eqref{eq:conEigen} and let $(\lambda_h,\sigma_h,u_h)$ solve \eqref{eq:disEigen}. 
There exists a function $\rho_{sc}(h)$ tending to zero as $h\rightarrow 0$ such that
\begin{align}
\label{eq:supcon}
\norm{P_h u-u_h}\leq \rho_{sc}(h)(\norm{\sigma-\sigma_h}_A+\norm{u-u_h}) 
\end{align}
where $P_h$ denotes the $L^2$ projection onto $V_h$.
\end{lemma}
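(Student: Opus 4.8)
The plan is to run an Aubin--Nitsche duality argument. Set $g:=P_hu-u_h\in V_h$ and introduce the dual source problem: find $(\chi,z)\in\Sigma\times V$ with $(A\chi,\tau)+(\ddiv\tau,z)=0$ for all $\tau\in\Sigma$ and $(\ddiv\chi,v)=-(g,v)$ for all $v\in V$. Since $\Omega$ is a bounded Lipschitz polygon and $A$ is uniformly positive definite, elliptic regularity for this linear elasticity system yields $z\in H^{1+s}(\Omega)$ and $\chi\in H^s(\Omega;\mathbb{S})$ for some $s\in(0,1]$, together with $\norm{z}_{H^{1+s}(\Omega)}+\norm{\chi}_{H^s(\Omega)}\le C\norm{g}$. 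Testing the second dual equation with $v=g$ gives the identity $\norm{g}^2=-(\ddiv\chi,g)$, which I will expand.

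First I would choose a commuting Fortin-type interpolation $\chi_h\in\Sigma_h$ of $\chi$ satisfying $\ddiv\chi_h=P_h\ddiv\chi$ and $\norm{\chi-\chi_h}\le C h^s\norm{\chi}_{H^s(\Omega)}$; such an operator is available for the extended space $\Sigma_h$ from the construction of~\cite{HuMa2019} together with $\ddiv\Sigma_h\subseteq V_h$. Because $g\in V_h$ the commuting property gives $(\ddiv\chi,g)=(\ddiv\chi_h,g)$, and since $\Sigma_h\subset\Sigma$ I may test the primal equations~\eqref{eq:conEigen} and~\eqref{eq:disEigen} with $\tau=\chi_h$. Inserting $\ddiv\chi_h\in V_h$ and using $u_h=P_hu_h$, these manipulations collapse $\norm{g}^2$ to $(A(\sigma-\sigma_h),\chi_h)$; writing $\chi_h=\chi+(\chi_h-\chi)$ and eliminating $(A(\sigma-\sigma_h),\chi)$ through the first dual equation with $\tau=\sigma-\sigma_h$ then yields the representation
\[
\norm{g}^2=(A(\sigma-\sigma_h),\chi_h-\chi)-(\ddiv(\sigma-\sigma_h),z).
\]
The first term is already superconvergent, since $|(A(\sigma-\sigma_h),\chi_h-\chi)|\le C\norm{\sigma-\sigma_h}_A\,\norm{\chi-\chi_h}\le C h^s\norm{g}\,\norm{\sigma-\sigma_h}_A$.

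It remains to treat $-(\ddiv(\sigma-\sigma_h),z)$, where the eigenvalue mismatch enters through $\ddiv\sigma=-\lambda u$ and $\ddiv\sigma_h$ tested only against $V_h$ with weight $\lambda_hu_h$. Splitting $z=P_hz+(z-P_hz)$ and using $\ddiv\sigma_h\in V_h$, the part against $z-P_hz$ becomes $\lambda(u-P_hu,z-P_hz)$, a product of two projection errors bounded by $Ch^s\norm{u-u_h}\,\norm{g}$, while the part against $P_hz$ produces $\lambda(g,P_hz)$ plus a term carrying the factor $(\lambda-\lambda_h)$. The latter factor is controlled by the a~priori estimate $|\lambda-\lambda_h|\le C\varepsilon_h^2$, so its contribution is of higher order. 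The genuinely delicate term is the resonant contribution $\lambda(g,P_hz)=\lambda(g,z)$: writing $z=Tg$ with $T$ the (self-adjoint, compact) continuous solution operator and decomposing $g=Qg+(I-Q)g$ along the $L^2$-orthogonal spectral projection $Q$ onto the one-dimensional eigenspace $E$, one finds $\lambda(g,Tg)=\norm{Qg}^2+\lambda((I-Q)g,T(I-Q)g)$. Here $Qg=(g,u)u$ and, because $\norm{u}=\norm{u_h}=1$, the inner product $(g,u)=-\norm{u-P_hu}^2+\tfrac12\norm{u-u_h}^2$ is \emph{quadratically} small, so that $\norm{Qg}\le C\norm{u-u_h}^2$ is harmless.

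I expect the main obstacle to be the complementary term $\lambda((I-Q)g,T(I-Q)g)$, which is not reduced by the duality and must be absorbed into the left-hand side. For this I would invoke the spectral structure of $T$ around $1/\lambda$ — the single-eigenvalue hypothesis guarantees a gap — and the convergence of $(\lambda_h,u_h)$ to the exact eigenmode, requiring $\cT_0$ to be fine enough so that the residual contributions are controlled; after absorption and division one obtains $\norm{g}\le\rho_{sc}(h)(\norm{\sigma-\sigma_h}_A+\norm{u-u_h})$ with $\rho_{sc}(h)=O(h^s)\to0$. Quantifying this resonant term through the gap and the a~priori convergence — exactly the point where the spectral theory of~\cite{BoffiGGG} enters — is the conceptual crux, whereas the construction of the commuting interpolation $\chi_h$ for the non-nested space $\Sigma_h$ is the principal technical ingredient.
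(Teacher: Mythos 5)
The paper does not actually prove this lemma: its ``proof'' is a pointer to the duality argument of~\cite{DouglasRoberts1982} and its eigenvalue versions in~\cite[Prop.~6.5]{BoffiGGG} and~\cite[Thm~3.5]{GedickeKhan2018}. Your Aubin--Nitsche argument is exactly the route those references take, and your algebra is sound up to and including the representation $\norm{g}^2=(A(\sigma-\sigma_h),\chi_h-\chi)-(\ddiv(\sigma-\sigma_h),z)$, the splitting of the second term, and the quadratic bound $|(g,u)|\le C\norm{u-u_h}^2$; you have also correctly isolated $\lambda(g,Tg)$ as the only genuinely resonant contribution.

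The step that does not go through as written is the absorption of $\lambda((I-Q)g,T(I-Q)g)$ into the left-hand side. Absorption requires $I-\lambda T$ to be positive definite on $E^\perp$, i.e. $1-\lambda/\lambda_j>0$ for every $\lambda_j\neq\lambda$. That holds only for $\lambda=\lambda_1$: for an interior eigenvalue the eigenspaces of the smaller $\lambda_j$ are negative directions of $I-\lambda T$, and no spectral gap rescues a quadratic-form argument. The cited proofs instead estimate the components of $g$ along each $u_j$ individually, writing $(g,u_j)=(P_hu-u,u_j-P_hu_j)-(u_h,u_j)$, where the first term is a product of two projection errors and $|(u_h,u_j)|$ is controlled by the convergence of the discrete invariant subspaces (the Babu\v{s}ka--Osborn machinery as organized in~\cite{BoffiGGG}); this is a quantitative estimate that has to be proved, not merely invoked as ``the spectral theory enters here.'' A second, more minor, gap: the canonical interpolation onto $\Sigma_h$ uses vertex and edge degrees of freedom and is not defined on $H^s(\Omega;\mathbb{S})$ for small $s$, so your commuting Fortin interpolant $\chi_h$ of the merely $H^s$-regular dual stress is not available as stated; the standard fix is to take $\chi_h$ to be the discrete solution of the dual source problem and use its a priori error bound $\norm{\chi-\chi_h}_A\le Ch^s\norm{g}$, which yields the same representation formula without evaluating point or edge functionals of $\chi$.
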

\begin{proof}
This result is standard in the framework of mixed problems~\cite{DouglasRoberts1982} and it has been extended to eigenvalue problems in~\cite[Proposition~6.5]{BoffiGGG} for the standard mixed finite elements for the mixed Poisson problem and in~\cite[Thm~3.5]{GedickeKhan2018} for Arnold--Winther mixed finite elements.
\end{proof}
The following relations between the error in the eigenvalues and the eigenfunctions is a common tool for the analysis of mixed schemes (see \cite[Lemma~4]{DuranGP1999})
\begin{align}\label{eq:relation}
\begin{aligned}
\lambda-\lambda_h&=\norm{\sigma-\sigma_h}_ A^2-\lambda_h\norm{u-u_h}^2,\\
\lambda_h-\lambda_H&=\norm{\sigma_h-\sigma_H}_A^2-\lambda_H\norm{u_h-u_H}^2.
\end{aligned}
\end{align}
From the a priori error estimates for $\norm{\sigma-\sigma_h}_A$ and $\norm{u-u_h}$ we then obtain the existence of $\rho_0(H)$ tending to zero as $H$ goes to zero such that
\begin{align}\label{eq:eslam}
|\lambda_h-\lambda_H|\leq\rho_0(H)(\norm{\sigma_h-\sigma_H}_A+\norm{u_h-u_H}).
\end{align}
The following auxiliary problem will be useful for the next results: Find $(\widehat{\sigma}_H,\widehat{u}_H)\in\Sigma_H\times V_H$ such that
\begin{align}
\begin{aligned}
 (A\widehat{\sigma}_H,\tau_H)+(\ddiv\tau_H,\widehat{u}_H)&= 0 &  \text{ for all   } \tau_H\in\Sigma_H ,\\
   (\ddiv\widehat{\sigma}_H,v_H)&=- (\lambda_h u_h,v_H)  & \text{ for all \ } v_H\in V_H.\label{eq:auxdisEigen}
\end{aligned}
\end{align}

The next two lemmas are the analogous of Lemma~4.2 and Lemma~4.3 of~\cite{BoffiG2019} and can obtained with obvious modification of the corresponding proofs.

 \begin{lemma}\label{lem:diffaux}
 Let $(\sigma_H,u_H)\in\Sigma_H\times V_H$ solve \eqref{eq:disEigen} on the mesh $\cT_H$ and $(\widehat{\sigma}_H,\widehat{u}_H)\in\Sigma_H\times V_H$ solve \eqref{eq:auxdisEigen} on the same mesh.  There exists $\rho_1(H)$ tending to zero as $H$ goes to zero such that
 \begin{align}
 \label{eq:diffaux}
 \norm{\sigma_H-\widehat{\sigma}_H}_{A}+\norm{u_H-\widehat{u}_H}\leq C\norm{\widehat{u}_H-P_H u_h}+\rho_1(H)(\norm{\sigma_h-\sigma_H}_A+\norm{u_h-u_H}).
 \end{align}
 \end{lemma}

 \begin{lemma}\label{lem:diffauxtwo}
 Let $(\sigma_h,u_h)\in\Sigma_h\times V_h$ solve \eqref{eq:disEigen} and $(\widehat{\sigma}_H,\widehat{u}_H)\in\Sigma_H\times V_H$ solve \eqref{eq:auxdisEigen}. Then for $H$ small enough, there exists $\epsilon>0$ only depending on $\Omega$ such that
 \begin{align*}
\norm{\widehat{u}_H-P_H u_h}\leq CH^\epsilon(\norm{\sigma_h-\sigma_H}_A+\norm{u_h-u_H}).
 \end{align*}
 \end{lemma}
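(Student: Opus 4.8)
The plan is to control $w_H:=\widehat u_H-P_H u_h\in V_H$ by an Aubin--Nitsche duality argument, in which the factor $H^\epsilon$ is produced by the elliptic regularity of $\Omega$, while the coarse--fine structure of the right-hand side is extracted from an orthogonality relation that hinges on the nestedness $\Sigma_H\subseteq\Sigma_h$. This mirrors \cite[Lemma~4.3]{BoffiG2019}, the only genuinely elasticity-specific ingredient being that $A\sigma_h$ is $L^2$-orthogonal to divergence-free symmetric tensors in $\Sigma_h$.

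First I would introduce three source problems with right-hand side $w_H$: the continuous one, with solution $(\zeta,z)$, and its Galerkin approximations $(\zeta_H,z_H)\in\Sigma_H\times V_H$ and $(\zeta_h,z_h)\in\Sigma_h\times V_h$. Since the elasticity solution operator maps $L^2(\Omega)$ into $H^{1+\epsilon}(\Omega)$ for some $\epsilon>0$ depending only on the opening angles of $\Omega$, the a priori estimates of \cite{HuMa2019,HuZhang2015} give $\norm{\zeta-\zeta_h}_A+\norm{z-z_h}\le Ch^\epsilon\norm{w_H}$ and the analogous bound on $\cT_H$, so that $\norm{\zeta_h-\zeta_H}_A+\norm{z_h-z_H}\le CH^\epsilon\norm{w_H}$. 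Because $\ddiv\Sigma_h\subseteq V_h$, $\ddiv\Sigma_H\subseteq V_H$ and $w_H\in V_H\subseteq V_h$, both discrete problems yield $\ddiv\zeta_H=\ddiv\zeta_h=-w_H$, hence $\zeta_h-\zeta_H$ is divergence free; crucially, nestedness (\cite[Theorem~3.2]{HuMa2019}) guarantees $\zeta_h-\zeta_H\in\Sigma_h$.

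Next I would test the mixed equations to reach the identity
\[
\norm{w_H}^2=(A(\sigma_h-\sigma_H),\zeta_h-\zeta_H)-(\lambda_h u_h-\lambda_H u_H,\,z_h-z_H).
\]
Concretely, testing the coarse and fine dual problems by $(\widehat\sigma_H,\widehat u_H)$ and $(\sigma_h,u_h)$ and using the primal equations \eqref{eq:auxdisEigen} and \eqref{eq:disEigen} yields $\norm{w_H}^2=\lambda_h(u_h,z_H-z_h)=(\ddiv\sigma_h,z_h-z_H)$. Splitting $\ddiv\sigma_h=\ddiv\sigma_H+\ddiv(\sigma_h-\sigma_H)$, using $\ddiv(\sigma_h-\sigma_H)=-(\lambda_h u_h-\lambda_H u_H)$ for the second summand, and testing the difference of the two dual first equations by $\sigma_H\in\Sigma_H$ for the first summand then produces the two displayed terms.

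The hard part is the first term: a bare Cauchy--Schwarz would only give $\norm{\sigma_H}_A\,\norm{\zeta_h-\zeta_H}_A=O(H^\epsilon\norm{w_H})$, without the coarse--fine smallness we need. The key observation is that $(A\sigma_h,\zeta_h-\zeta_H)=0$, which follows from the first equation of \eqref{eq:disEigen} tested with $\tau_h=\zeta_h-\zeta_H\in\Sigma_h$ (here nestedness is essential) because $\zeta_h-\zeta_H$ is divergence free. Hence $(A\sigma_H,\zeta_h-\zeta_H)=(A(\sigma_H-\sigma_h),\zeta_h-\zeta_H)$, so the first term equals $(A(\sigma_h-\sigma_H),\zeta_h-\zeta_H)$ and is bounded by $CH^\epsilon\norm{w_H}\,\norm{\sigma_h-\sigma_H}_A$. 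For the second term I would write $\lambda_h u_h-\lambda_H u_H=\lambda_h(u_h-u_H)+(\lambda_h-\lambda_H)u_H$ and invoke \eqref{eq:eslam} to absorb $|\lambda_h-\lambda_H|$ into $\norm{\sigma_h-\sigma_H}_A+\norm{u_h-u_H}$, obtaining a bound $CH^\epsilon\norm{w_H}(\norm{\sigma_h-\sigma_H}_A+\norm{u_h-u_H})$. Dividing by $\norm{w_H}$ then gives the asserted estimate.
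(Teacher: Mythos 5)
Your argument is correct, and it is essentially the intended one: the paper itself gives no proof of this lemma but refers to Lemma~4.3 of~\cite{BoffiG2019}, whose Aubin--Nitsche duality argument with auxiliary source problems on both meshes is exactly what you have adapted, including the key use of nestedness to test \eqref{eq:disEigen} with the divergence-free difference $\zeta_h-\zeta_H$ and the use of \eqref{eq:eslam} to absorb $|\lambda_h-\lambda_H|$. The only point you take for granted, as does the paper, is that the reduced-regularity a priori bounds $\norm{\zeta-\zeta_\kappa}_A+\norm{z-z_\kappa}\le C\kappa^\epsilon\norm{w_H}$ hold for these elements with $H^\epsilon$ stress regularity.
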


\begin{proof}[Proof of Proposition~\ref{thm:discrete reliability}]
We first estimate the term $\norm{\sigma_h-\sigma_H}_A$. Introduce the auxiliary solution $(\zeta_h,r_h)\in\Sigma_h\times V_h$ of
 \begin{align}
 \label{eq:auxdis}
 \begin{aligned}
  (A\zeta_h,\tau_h)+(\ddiv\tau_h,r_h)&= 0 &&  \text{ for all   } \tau_h\in\Sigma_h ,\\
   (\ddiv\zeta_h,v_h)&=  (\ddiv(\sigma_h-\widehat{\sigma}_H),v_h)  && \text{ for all \ } v_h\in V_h,
 \end{aligned}
 \end{align}
where $\widehat{\sigma}_H$ was defined in~\eqref{eq:auxdisEigen}. Since $\ddiv(\sigma_h-\widehat{\sigma}_H-\zeta_h)=0$, the discrete Helmholtz decomposition \cite[Lemma~3.6]{HuMa2019} shows that there exists $\phi_h\in H^2(\Omega)$ in  the extended Argyris finite element space \cite{CarstensenHu2019} such that
\begin{align*}
 \sigma_h-\sigma_H=\ccurl\ccurl \phi_h+\zeta_h+\widehat{\sigma}_H-\sigma_H.
\end{align*}
Lemma~\ref{lem:diffaux} and \ref{lem:diffauxtwo} show that there exists $\rho_0(H)$ tending to zeros as $H$ goes to zero such that
\begin{align}\label{eq:termone}
\norm{\widehat{\sigma}_H-\sigma_H}_A\leq C\rho_0(H)(\norm{\sigma_h-\sigma_H}_A+\norm{u_h-u_H}).
\end{align}
Since $(A\zeta_h,\ccurl\ccurl\phi_h)=0$, it holds
\begin{align*}
 \norm{\ccurl\ccurl \phi_h}_A\leq C(\norm{\sigma_h-\sigma_H}_A+\rho_0(H)\norm{u_h-u_H}).
\end{align*}
The choice $\tau_h=\ccurl\ccurl\phi_h$ in \eqref{eq:disEigen} shows
\begin{align*}
 \norm{\ccurl\ccurl \phi_h}_A^2&=(A(\sigma_h-\widehat{\sigma}_H),\ccurl\ccurl \phi_h)=-(A\widehat{\sigma}_H ,\ccurl\ccurl\phi_h)\\
 &=-(A(\widehat{\sigma}_H -\sigma_H),\ccurl\ccurl\phi_h)-(A\sigma_H,\ccurl\ccurl\phi_h).
\end{align*}
It has been proven in \cite{HuMa2019} that
\begin{align}
-(A\sigma_H,\ccurl\ccurl\phi_h)\leq C \eta_1(\cT_H,\cT_H\setminus\cT_h)  \norm{\ccurl\ccurl \phi_h}_A,
\end{align}
where $\eta ^2_1(\cT_H,\cT_H\setminus\cT_h)=\sum_{K\in\cT_H\setminus\cT_h}\eta_1^2(\cT_H,K)$ is the error estimator from~\cite{ChenHHM2018}, that is,
\begin{align*} 
\eta_1^2(\cT_H,K): = h_K^4\|\ccurl\ccurl(A\sigma_H)\|_{L^2(K)}^2+\sum_{e\in\cE(K)}\Big(h_{K}\|\mathcal{J}_{e,1}\|_{L^2(e)}^2+h_K^3\|\mathcal{J}_{e,2}\|_{L^2(e)}^2\Big).
\end{align*}

It remains to estimate $\norm{\zeta_h}_A$. Since $\ddiv\zeta_h=\lambda_hu_h-P_H(\lambda_hu_h)$, the $L^2$ best approximation \cite[Thm~5.1]{CarstensenGS2016} shows 
\begin{align*}
\norm{\zeta_h}_A\leq CH\norm{\lambda_h u_h-P_H(\lambda_h u_h)}\leq CH\norm{u_h-u_H}.
\end{align*}
The combination of the previous estimates lead to
\begin{align}\label{eq:errsigma}
\norm{\sigma_h-\sigma_H}_A\leq C\big(\eta_1(\cT_H,\cT_H\setminus\cT_h)+(\rho_0(H)+H)(\norm{\sigma_h-\sigma_H}_A+\norm{u_h-u_H})\big).
\end{align}

The second step is to analyze $\norm{u_h-u_H}$. Adding and subtracting $P_H u_h$ and $\widehat{u}_H$ we have
\begin{align}\label{eq:sumU}
\begin{aligned}
u_h-u_H=(P_H u_h-\widehat{u}_H)+(\widehat{u}_H-u_H)-(P_H u_h-u_h).
\end{aligned}
\end{align}
Lemma~\ref{lem:diffaux} and \ref{lem:diffauxtwo} show that
\begin{align}\label{eq:sumUa}
\norm{P_H u_h-\widehat{u}_H} +\norm{\widehat{u}_H-u_H}\leq C\rho_0(H)(\norm{\sigma_h-\sigma_H}_A+\norm{u_h-u_H}).
\end{align}
It remains to estimate $P_Hu_h-u_h$. Given any $K\in\cT_H\setminus\cT_h$, since $P_Hu_h-u_h$ is zero mean valued on $K$, there exists $\tau\in H^1_0(K,\mathbb{S})$ such that $\ddiv\tau=P_Hu_h-u_h$ and $\norm{\tau}_{H^1(K)}\leq C\norm{P_Hu_h-u_h}$. Since $(P_Hu_h,\ddiv\tau)=(P_Hu_h,P_Hu_h-u_h)=0$ and $P_H\ddiv\tau=P_H(P_Hu_h-u_h)=0$, we have
\[
\aligned
(P_Hu_h-u_h,P_Hu_h-u_h)_{L^2(K)}&=(P_Hu_h-u_h,\ddiv\tau)_{L^2(K)}\\
&=-(u_h,\ddiv\tau)_{L^2(K)}=-(u_h,P_h\ddiv\tau)_{L^2(K)}\\
&=-(u_h,P_h\ddiv\tau-P_H\ddiv\tau)_{L^2(K)}.
\endaligned
\]
We can extend $\tau$ by zero to $\Omega$, so that we have
\begin{align*}
(P_Hu_h-u_h,P_Hu_h-u_h)_{L^2(K)}=(u_h,P_H\ddiv\tau-P_h\ddiv\tau).
\end{align*}
Let $\Pi_h$ (resp. $\Pi_H$) denote the interpolation presented in \cite[Remark~3.1]{Hu2015} with respect to the mesh $\cT_h$ (resp. $\cT_H$), which satisfies $P_h\ddiv\tau=\ddiv \Pi_h\tau$ (resp. $P_H\ddiv\tau=\ddiv \Pi_H\tau$).

We will also use later on that $\Pi_h\tau|_K=\Pi_H\tau|_K$ for any $K\in\cT_H\cap\cT_h$.

Then, using the properties of the interpolation, the nestedness of the spaces, and the variational formulation, we have
\[
\aligned
(P_Hu_h-u_h,P_Hu_h-u_h)_{L^2(K)}&=(u_h,P_H\ddiv\tau-P_h\ddiv\tau)\\
&=(u_h,\ddiv (\Pi_H-\Pi_h)\tau)\\
&=(A\sigma_h,(\Pi_h-\Pi_H)\tau)
\endaligned
\]
Adding and subtracting suitable quantities, we are led to the sum of the following three terms
\begin{align}\label{eq:sumUb}
\begin{aligned}
     \norm{P_Hu_h-u_h}^2_{L^2(K)}={}&(A(\sigma_h-\sigma_H),(\Pi_h-\Pi_H)\tau)\\
&+(A\sigma_H-\epsilon_H(u_H),(\Pi_h-\Pi_H)\tau)\\
&+(\epsilon_H(u_H),(\Pi_h-\Pi_H)\tau),   
\end{aligned}
\end{align}
where $\epsilon_H$ denotes the piecewise symmetric gradient on the mesh $\cT_H$.

The first term is bounded by $CH\norm{\sigma_h-\sigma_H}_A\norm{P_Hu_h-u_h}$. The second term is bounded by 
\begin{align*}
&(A\sigma_H-\epsilon_H(u_H),(\Pi_h-\Pi_H)\tau)\\
\leq &C \Big(\sum_{K\in\cT_H\setminus\cT_h}h_K^2\norm{A\sigma_H-\epsilon(u_H)}_{L^2(K)}^2\Big)^{1/2}\norm{P_Hu_h-u_h}.
\end{align*}
For the last term,  an integration by parts shows
\begin{align*}
 (\epsilon_H(u_H),(\Pi_h-\Pi_H)\tau)&=\sum_{e \in \cE_H}\int_e(\Pi_h-\Pi_H)\tau \nu_e\cdot[u_H]_e\,ds\\
 &\leq C\big(\sum_{K\in\cT_H\setminus\cT_h}\sum_{e\in\cE(K)}h_K\norm{[u_H]_e}_{L^2(e)}^2\big)^{1/2}\norm{P_Hu_h-u_h}.
\end{align*}
The previous   estimates applied in \eqref{eq:sumUb} and \eqref{eq:sumU}--\eqref{eq:sumUa} lead to
 \begin{align}
 \label{eq:estvect}
 \begin{aligned}
\norm {u_h-u_H}\leq &C\bigg(\Big(\sum_{K\in\cT_H\setminus\cT_h}\big(h_K^2\norm{A\sigma_H-\epsilon(u_H)}_{L^2(K)}^2+\sum_{e\in\cE(K)}h_K\norm{[u_H]_e}_{L^2(e)}^2\big)\Big)^{1/2}\\
&+(\rho_0(H)+H)(\norm{\sigma_h-\sigma_H}_A+\norm{u_h-u_H})\bigg).
 \end{aligned}
 \end{align}
A combination of \eqref{eq:errsigma} and \eqref{eq:estvect} concludes the proof.

\end{proof}


\begin{proof}[Proof of Proposition~\ref{thm:quasi-ortho}]

We have by direct computation that
\begin{align*}
\norm{\sigma_h-\sigma_H}^2_A&=\norm{\sigma-\sigma_H}_A^2-\norm{\sigma-\sigma_h}_A^2-2(A(\sigma-\sigma_h),\sigma_h-\sigma_H),\\
\norm{u_h-u_H}^2&=\norm{u-u_H}^2-\norm{u-u_h}^2-2(P_h u-u_h,u_h-u_H).
\end{align*}
The combination of \eqref{eq:conEigen} and \eqref{eq:disEigen} implies
\begin{align*}
(A(\sigma-\sigma_h),\sigma_h-\sigma_H)&=-(\ddiv(\sigma_h-\sigma_H),u-u_h)\\
&=(\lambda_hu_h-\lambda_Hu_H,u-u_h)\\
&=(\lambda_hu_h-\lambda_Hu_H,P_hu-u_h).
\end{align*}
Lemma~\ref{lem:supcon} and \eqref{eq:relation} show
\begin{align*}
&(A(\sigma-\sigma_h),\sigma_h-\sigma_H)+(P_h u-u_h,u_h-u_H)\\
&\quad=(\lambda_hu_h-\lambda_Hu_H,P_hu-u_h)+(u_h-u_H,P_h u-u_h)\\
&\quad\leq(|\lambda_h-\lambda_H|+(1+\lambda_H)\norm{u_h-u_H})\norm{P_hu-u_h}\\
&\quad\leq(\norm{\sigma_h-\sigma_H}_A^2+\lambda_H\norm{u_h-u_H}^2\\
&\qquad+(1+\lambda_H)\norm{u_h-u_H})\cdot\rho_{sc}(h)(\norm{\sigma-\sigma_h}_A+\norm{u-u_h}).
\end{align*}
This concludes the proof.

\end{proof}


\begin{proof}[Proof of Proposition~\ref{thm:contraction}]

The contraction property is quite standard in the framework of adaptive schemes.  It is a consequence of the following error estimator reduction property:  there exist constants $\beta_1\in(0,\infty)$  and $\gamma_1\in (0,1)$  such that, if $\cT_{\ell+1}$ is the refinement of $\cT_\ell$ generated by the adaptive scheme, then it holds that
\begin{align*}
    \eta^2(\cT_{\ell+1})\leq\gamma_1\eta^2(\cT_\ell)+\beta_1(\norm{\sigma_\ell-\sigma_{\ell+1}}_A^2+\norm{u_\ell-u_{\ell+1}}^2).
\end{align*}

\end{proof}

\section{Local reconstruction}
\label{sec:postestimator}

In this section we show how to apply the procedure presented in~\cite{GedickeKhan2018} to our approximate solution. Since the analysis of~\cite{GedickeKhan2018} can be carried over directly with the natural modifications to our situation, we limit ourselves to the description of what can be done. The aim is to construct locally an eigenfunction $u^*_h$ that can be used both for the definition of a postprocessed eigenvalue $\lambda^*_h$ and for the design of a new error estimator.

For $m\geq k+1$ we define
\begin{align*}
    V_h^\ast=\{v\in L^2(\Omega)\ |\ v|_K\in P_m(K;\bR^2)\text{ for all }K\in\cT_h\}.
\end{align*}
Given a pair of functions $(\sigma_h,u_h)\in\Sigma_h\times U_h$, on each $K\in\cT_h$ we set $P_K=P_h|_K$ and we define $u_h^\ast\in V_h^\ast$ as a solution of the system
\begin{align}
    \begin{aligned}
    P_ku_h^\ast&=u_h,\\
    (\epsilon(u_h^\ast),\epsilon(v_h))_{L^2(K)}&=(A\sigma_h,\epsilon(v))_{L^2(K)}\quad\text{for all }v\in (I-P_K)V_h^\ast|_K.
    \end{aligned}
\end{align}
This postprocessing has  been used in \cite{ChenHH2018,ChenHHM2018} for the source problem.

If $(u_h^\ast,u_h^\ast)\neq 0$ we can define the postprocessed eigenvalue as the value of the Rayleigh quotient of the postprocessed eigenfunction
\begin{align*}
    \lambda_h^\ast=-\frac{(\ddiv\sigma_h,u_h^\ast)}{(u_h^\ast,u_h^\ast)}.
\end{align*}

It is then possible in a natural way to define an error indicator based on the postprocessed solution as follows
\begin{align*}
    \eta_\ast^2:=\norm{A\sigma_h-\epsilon_h(u_h^\ast)}^2+\sum_{K\in\cT_h}h_K^2\norm{\lambda_h^\ast u_h^\ast+\ddiv\sigma_h}^2_{L^2(K)}+\sum_{e\in\cE_h}h_e^{-1}\norm{[u_h^\ast]_e}_{L^2(e)}^2.
\end{align*}


The following result expresses a reliability estimate and can be obtained as in~\cite[Thm~5.3]{GedickeKhan2018}

\begin{proposition}

The postprocessed eigenfunction $u_h^\ast$ satisfies the following reliability estimate:
\begin{align*}
    \norm{\sigma-\sigma_h}_A+\norm{\epsilon_h(u-u_h^\ast)}\leq C(\eta_*+\mathcal{Y})
\end{align*}
where $\mathcal{Y}$ is the higher order term
\begin{align*}
    \mathcal{Y}:=\Big(\sum_{K\in\cT_h}h_K^2\norm{\lambda u-\lambda_h^\ast u_h^\ast}^2_{L^2(K)}\Big)^{1/2}+\lambda_h\norm{u-u_h^\ast}+|\lambda-\lambda_h|.
\end{align*}

\end{proposition}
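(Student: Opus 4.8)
The plan is to reduce both terms on the left-hand side to a single \emph{conforming} energy error and then estimate the latter by a residual argument of the type used in~\cite{GedickeKhan2018}. Throughout I use the two pointwise identities furnished by the continuous problem~\eqref{eq:conEigen}, namely $\epsilon(u)=A\sigma$ and $\ddiv\sigma=-\lambda u$, together with the fact that the natural boundary condition forces $u\in H^1_0(\Omega;\bR^2)$. Since $A$ is bounded and uniformly positive definite, $\norm{\sigma-\sigma_h}_A$ and $\norm{A(\sigma-\sigma_h)}$ are equivalent, so it suffices to control $\norm{A\sigma-A\sigma_h}=\norm{\epsilon(u)-A\sigma_h}$.

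First I would remove the nonconformity of $u_h^\ast$. Let $J_h$ be a nodal averaging (Oswald-type) operator mapping the broken space $V_h^\ast$ into $V_h^\ast\cap H^1_0(\Omega;\bR^2)$ and set $\tilde u:=J_h u_h^\ast$. The standard estimate for such operators gives $\norm{\epsilon_h(u_h^\ast-\tilde u)}^2\le C\sum_{e\in\cE_h}h_e^{-1}\norm{[u_h^\ast]_e}_{L^2(e)}^2\le C\eta_\ast^2$, where boundary edges record the deviation of $u_h^\ast$ from the homogeneous boundary condition. Inserting $\tilde u$ and using the triangle inequality, both target quantities are controlled by the conforming error $\norm{\epsilon(u-\tilde u)}$ up to $C\eta_\ast$: indeed $\norm{\epsilon(u)-A\sigma_h}\le\norm{\epsilon(u-\tilde u)}+\norm{\epsilon_h(\tilde u-u_h^\ast)}+\norm{\epsilon_h(u_h^\ast)-A\sigma_h}$, and the last two summands are bounded by $\eta_\ast$ (the last being exactly the first term of $\eta_\ast$); the same decomposition bounds $\norm{\epsilon_h(u-u_h^\ast)}$.

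The core step is then to bound $\norm{\epsilon(u-\tilde u)}$. Writing $a(w,v):=(A^{-1}\epsilon(w),\epsilon(v))$, coercivity (Korn) reduces this to estimating $a(u-\tilde u,v)$ for $v=u-\tilde u\in H^1_0$. Using $a(u,v)=(\lambda u,v)$ and an elementwise integration by parts of $(\sigma_h,\epsilon(v))$ --- whose interior edge contributions vanish because $v$ is continuous and $\sigma_h\nu$ has no jump, and whose boundary contribution vanishes because $v\in H^1_0$ --- I would arrive at
\begin{align*}
a(u-\tilde u,v)=(\lambda u+\ddiv\sigma_h,v)-(A^{-1}\epsilon(\tilde u)-\sigma_h,\epsilon(v)).
\end{align*}
The constitutive term is controlled by $C\eta_\ast\norm{\epsilon(v)}$ after writing $A^{-1}\epsilon(\tilde u)-\sigma_h=A^{-1}(\epsilon(\tilde u)-\epsilon_h(u_h^\ast))+A^{-1}(\epsilon_h(u_h^\ast)-A\sigma_h)$ and invoking the companion estimate and the first term of $\eta_\ast$. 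For the volume term I would use the exact discrete relation $\ddiv\sigma_h=-\lambda_h u_h$ (valid since $\ddiv\Sigma_h\subseteq V_h$) to rewrite $\lambda u+\ddiv\sigma_h=(\lambda_h^\ast u_h^\ast+\ddiv\sigma_h)+(\lambda u-\lambda_h^\ast u_h^\ast)$; subtracting a quasi-interpolant $I_hv\in V_h$ and using $\norm{v-I_hv}_{L^2(K)}\le Ch_K\norm{\epsilon(v)}_{\omega_K}$ produces the weighted equilibrium term $(\sum_K h_K^2\norm{\lambda_h^\ast u_h^\ast+\ddiv\sigma_h}_{L^2(K)}^2)^{1/2}$ of $\eta_\ast$, while the remainder $(\lambda u-\lambda_h^\ast u_h^\ast,v)$ and the interpolation defect feed into $\mathcal Y$ (the summand $(\sum_K h_K^2\norm{\lambda u-\lambda_h^\ast u_h^\ast}^2)^{1/2}$ directly, and the pieces $\lambda u-\lambda_h^\ast u=(\lambda-\lambda_h^\ast)u$ and $\lambda_h^\ast(u-u_h^\ast)$ as $|\lambda-\lambda_h|$ and $\lambda_h\norm{u-u_h^\ast}$ up to higher order). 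Collecting the estimates gives $\norm{\epsilon(u-\tilde u)}\le C(\eta_\ast+\mathcal Y)$, and with the reductions above the claim follows.

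I expect the main obstacle to be twofold and genuinely eigenvalue-specific. The first difficulty is that the mixed stress error and the broken reconstruction $u_h^\ast$ possess no edge-trace regularity, so one cannot test directly against $\sigma-\sigma_h$; the conforming companion $\tilde u$ circumvents this and is precisely what converts the displacement-jump part of $\eta_\ast$ into usable information. The second, and harder, difficulty is the bookkeeping of the eigenvalue cross-terms: one must verify that every contribution in which the exact pair $(\lambda,u)$ is replaced by $(\lambda_h^\ast,u_h^\ast)$ is either already present in $\mathcal Y$ or is of higher order (e.g.\ controlling $|\lambda_h^\ast-\lambda_h|$ through the Rayleigh quotient and Lemma~\ref{lem:supcon}), which is exactly the point where the source-problem argument of~\cite{GedickeKhan2018} must be supplemented.
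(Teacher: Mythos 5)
Your argument is correct and is essentially the proof the paper has in mind: the paper gives no proof of its own, deferring entirely to \cite[Thm~5.3]{GedickeKhan2018}, and your reconstruction (Oswald-type conforming companion $\tilde u$ controlled by the jump part of $\eta_\ast$, reduction of both error quantities to $\norm{\epsilon(u-\tilde u)}$, coercivity of $(A^{-1}\epsilon(\cdot),\epsilon(\cdot))$ on $H^1_0$, elementwise integration by parts against $\sigma_h\in H(\ddiv;\mathbb{S})$, quasi-interpolation for the volume residual, and absorption of the eigenvalue cross-terms such as $\lambda_h^\ast-\lambda_h$ into the higher-order term $\mathcal{Y}$) is exactly that argument transplanted to the present element. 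The only point worth making fully explicit in a write-up is the identity $\ddiv\sigma_h=-\lambda_h u_h$ together with $P_h u_h^\ast=u_h$, which shows that the leftover term $(\lambda_h^\ast u_h^\ast+\ddiv\sigma_h,I_hv)=((\lambda_h^\ast-\lambda_h)u_h,I_hv)$ is indeed of higher order, as you anticipate.
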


\section{Numerical results}


We conclude this paper with some numerical results of $k=3$.

Consider the L-shaped domain with $\Omega=(-1,1)^2\setminus[0 ,1]^2$. Let $A\sigma=\frac{1}{2\nu}(\sigma-\frac{1}{2}({\rm tr}\sigma)\delta)$ with $\nu=1$, which corresponds with a Stokes problem.
We are interested in the approximation of the first (singular) eigenvalue; we computed a reference solution with generalized Taylor--Hood element of order five on a uniform refinement of the last mesh that we obtained with our adaptive scheme. The obtained value is $\lambda=32.13269464746$.

In Figure~\ref{LshapeAdap} we report on the same plot all the obtained results.
We compare the error $|\lambda-\lambda_\ell|$ and the error of the postprocessed solution $|\lambda-\lambda_\ell^\ast|$ by using both the residual estimator $\eta$ from Section~\ref{sec:resestimator} and the estimator based on the postprocessed solution $\eta_{\ast}$ from Section~\ref{sec:postestimator} in the adaptive algorithm. we can see that the error $|\lambda-\lambda_\ell|$ is smaller by using $\eta_{\ast}$, while  the postprocessed error $|\lambda-\lambda_\ell^\ast|$ is almost the same.
It can be appreciated that in general the error corresponding to the postprocessed solution is smaller that the original one.
\begin{figure}\label{LshapeAdap}\centering
\includegraphics[width=10cm]{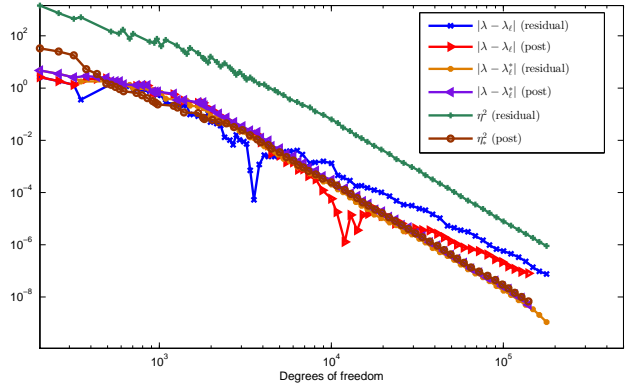}
\caption{Convergence history of the adaptive scheme for different choices of error estimator and error quantity}
\end{figure}

\end{document}